\newtheorem{thm}{Theorem}
\theoremstyle{definition}
\theoremstyle{remark}
\theoremstyle{plain}
\numberwithin{equation}{section}
\def\RR{{\mathbb R}}
\def\ZZ{{\mathbb Z}}
\def\hatZZ{\widehat{\mathbb Z}}
\def\veca{{\text{\boldmath$a$}}}
\def\vecb{{\text{\boldmath$b$}}}
\def\vecm{{\text{\boldmath$m$}}}
\def\vecp{{\text{\boldmath$p$}}}
\def\vecr{{\text{\boldmath$r$}}}
\def\vecx{{\text{\boldmath$x$}}}
\def\vecy{{\text{\boldmath$y$}}}
\def\vecnull{{\text{\boldmath$0$}}}
\def\scrB{{\mathcal B}}
\def\scrC{{\mathcal C}}
\def\scrF{{\mathcal F}}
\def\scrH{{\mathcal H}}
\def\scrR{{\mathcal R}}
\def\fC{{\mathfrak C}}
\def\e{\mathrm{e}}
\def\GL{\operatorname{GL}}
\def\SL{\operatorname{SL}}
\def\ASL{\operatorname{ASL}}
\def\supp{\operatorname{supp}}
\def\vol{\operatorname{vol}}
\def\GamG{\Gamma\backslash G}
\def\GamH{\Gamma_H\backslash H}
\def\SLSL{\SL(d,\ZZ)\backslash\SL(d,\RR)}
\def\SLZ{\SL(d,\ZZ)}
\def\SLR{\SL(d,\RR)}
\def\trans{\,^\mathrm{t}\!}
\title{Horospheres and Farey fractions}
\author{Jens Marklof}
\address{School of Mathematics, University of Bristol,
Bristol BS8 1TW, U.K.}
\email{j.marklof@bristol.ac.uk}
\date{16 December 2009/29 March 2010. To appear in Contemporary Mathematics}
\begin{document}

\begin{abstract}
We embed multidimensional Farey fractions in large horospheres and explain under which conditions they become uniformly distributed in the ambient homogeneous space. This question has recently been investigated in the case of $\SLZ$ to prove the asymptotic distribution of Frobenius numbers. The present paper extends these studies to general lattices in $\SLR$.
\end{abstract}

\maketitle

\section{Introduction \label{secIntro}}

Let $G:=\SLR$ and $\Gamma$ a lattice in $G$.
The right action
\begin{equation}
	\GamG \to \GamG, \qquad \Gamma M \mapsto \Gamma M \Phi^t,\qquad \Phi^t=\begin{pmatrix} \e^{-t} 1_{d-1} & \trans\vecnull \\ \vecnull & \e^{(d-1)t} \end{pmatrix}
\end{equation}
defines a flow on the homogeneous space $\GamG$. The subgroups generated by 
\begin{equation}
	n_+(\vecx)=\begin{pmatrix} 1_{d-1} & \trans\vecnull \\ \vecx & 1 \end{pmatrix},\qquad
	n_-(\vecx)=\begin{pmatrix} 1_{d-1} & \trans\vecx \\ \vecnull & 1 \end{pmatrix}  .
\end{equation}  
parametrize the {\em stable and unstable horospheres} of the flow $\Phi^t$ as $t\to\infty$. The classical equidistribution of large horospheres can be stated as follows; see Section 5 of \cite{partI} for a proof of this particular version.

\begin{thm}\label{equiThm0}
Let $\Gamma$ be a lattice in $\SLR$, $\lambda$ be a Borel probability measure on $\RR^{d-1}$, absolutely continuous with respect to Lebesgue measure, and let $f:\RR^{d-1}\times\GamG\to\RR$ be bounded continuous. Then
\begin{equation}
	\lim_{t\to\infty} \int_{\RR^{d-1}} f\big(\vecx,n_-(\vecx)\Phi^{t}\big)\, d\lambda(\vecx)  = \int_{\RR^{d-1}\times\GamG} f(\vecx,M) \, d\lambda(\vecx) \, d\mu_\Gamma(M) .
\end{equation}
\end{thm}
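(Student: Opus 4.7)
The plan is to deduce Theorem \ref{equiThm0} from mixing of the diagonal flow $\Phi^t$ on $(\GamG,\mu_\Gamma)$ by a standard Margulis thickening argument. I would first reduce to product test functions: by Stone--Weierstrass, finite sums $f(\vecx, M) = \sum_i g_i(\vecx) F_i(M)$ with $g_i \in C_c(\RR^{d-1})$ and $F_i \in C_c(\GamG)$ are dense in $C_0(\RR^{d-1}\times\GamG)$, so after absorbing the Radon--Nikodym density of $\lambda$ into the $g_i$, it suffices to prove
\begin{equation*}
    \lim_{t\to\infty} \int_{\RR^{d-1}} g(\vecx)\, F(n_-(\vecx)\Phi^t)\, d\vecx = \Big(\int g\, d\vecx\Big)\Big(\int F\, d\mu_\Gamma\Big)
\end{equation*}
for $g \in C_c(\RR^{d-1})$ and $F \in C_c(\GamG)$. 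The key analytic input is that $\Phi^t$ is mixing on $L^2_0(\GamG)$; this follows from the Howe--Moore theorem, since $\{\Phi^t\}$ generates a non-compact one-parameter subgroup of $G$.

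To convert mixing into horospheric equidistribution I would thicken transversally to the unstable horosphere. Let $V_\epsilon \subset G$ be a small neighborhood of the identity in the opposite parabolic $P = MAN_+$, where $A = \{\Phi^s\}$, $N_+$ is generated by the $n_+(\vecz)$, and $M$ is the semisimple part of the Levi (all of whose elements commute with $\Phi^t$); let $\rho_\epsilon$ be a smooth probability density on $V_\epsilon$. The commutation $\Phi^{-t} n_+(\vecz) \Phi^t = n_+(\e^{-dt}\vecz)$ gives, for $v = m a n_+(\vecz) \in V_\epsilon$,
\begin{equation*}
    n_-(\vecx)\, v\, \Phi^t = n_-(\vecx)\, \Phi^t \cdot m\, a\, n_+(\e^{-dt}\vecz),
\end{equation*}
whose rightmost factor stays $O(\epsilon)$-close to the identity uniformly in $v \in V_\epsilon$ as $t \to \infty$. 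By uniform continuity of $F$ the thickened integral
\begin{equation*}
    J(t) := \int g(\vecx) \int_{V_\epsilon} F(n_-(\vecx)\, v\, \Phi^t)\, \rho_\epsilon(v)\, dv\, d\vecx
\end{equation*}
differs from the original integral by $o_{t\to\infty}(1) + O(\omega_F(\epsilon))$, where $\omega_F$ is a modulus of continuity.

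On the other hand, the map $(\vecx, v) \mapsto n_-(\vecx) v$ is a local diffeomorphism from $\RR^{d-1} \times V_\epsilon$ onto the open Bruhat cell $N_- P \subset G$ (of full dimension $\dim N_- + \dim P = d^2-1$), and for $\epsilon$ sufficiently small its projection to $\GamG$ is injective on $\supp(g) \times V_\epsilon$. Hence
\begin{equation*}
    J(t) = \int_{\GamG} \Psi_\epsilon(M)\, F(M\Phi^t)\, d\mu_\Gamma(M)
\end{equation*}
for some $\Psi_\epsilon \in C_c(\GamG)$ with $\int \Psi_\epsilon\, d\mu_\Gamma = \int g\, d\vecx$ (after choosing the normalization of $\rho_\epsilon$ to absorb the modular factor of Haar measure in Bruhat coordinates). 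Mixing then yields $J(t) \to \int g\, d\vecx \cdot \int F\, d\mu_\Gamma$, and letting $\epsilon \to 0$ completes the proof of the product-form case.

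The main obstacle is not the thickening, which is routine once the horospherical parametrization is in place, but the passage from $F \in C_c(\GamG)$ to bounded continuous $f$ when $\Gamma$ is non-uniform: one must show that the push-forward measures $(n_-(\cdot)\Phi^t)_*\lambda$ are tight on $\GamG$ uniformly in $t$, i.e.\ that no mass escapes into the cusps. This non-divergence is typically established via the Dani--Margulis quantitative non-divergence estimates for unipotent (or more generally, horospherical) flows.
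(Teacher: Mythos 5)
Your argument is correct in outline, but it is a genuinely different route from the paper's. The paper does not prove Theorem \ref{equiThm0} at all: it cites Section 5 of \cite{partI}, where this statement (Theorem 5.3 there) is deduced from Shah's equidistribution theorem for translates of orbits \cite{Shah96}, i.e.\ ultimately from Ratner's measure classification \cite{Ratner91}, with the $\vecx$-dependence of the test function handled by an approximation argument; this is exactly the machinery the present paper reuses in Section \ref{secTwo} to prove Theorem \ref{equiThm2}. You instead run the classical Margulis/Eskin--McMullen scheme: mixing of $\Phi^t$ on $\GamG$ (Howe--Moore), thickening of the expanding horosphere $n_-(\RR^{d-1})$ in the non-expanding directions of the opposite parabolic, and a non-divergence input to pass from compactly supported to bounded continuous $f$. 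Both methods are standard and valid here. The mixing/thickening proof is more elementary (no Ratner theory) and can in principle be made effective; the Ratner/Shah route is what generalizes to the joint equidistribution for two incommensurable lattices (Theorem \ref{equiThm2}), where mixing of a single quotient no longer suffices, which is why the paper is organized around it.

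Two points in your write-up need repair, though neither is fatal. First, the claim that the projection of $\supp(g)\times V_\epsilon$ to $\GamG$ is injective for small $\epsilon$ is false in general: $\Gamma\cap n_-(\RR^{d-1})$ may be nontrivial (for $\Gamma=\SLZ$ one has $n_-(\vecm)\in\Gamma$ for all $\vecm\in\ZZ^{d-1}$), so if $\supp(g)$ has diameter at least $1$ the points $n_-(\vecx)v$ and $n_-(\vecx+\vecm)v$ lie in the same $\Gamma$-orbit no matter how small $\epsilon$ is. Injectivity is not needed: define $\Psi_\epsilon$ by periodization, $\Psi_\epsilon(\Gamma M)=\sum_{\gamma\in\Gamma}h(\gamma M)$ with $h$ the compactly supported density of the push-forward of $g(\vecx)\rho_\epsilon(v)\,d\vecx\,dv$ in Bruhat coordinates; the unfolding identity $J(t)=\int_{\GamG}\Psi_\epsilon(M)F(M\Phi^t)\,d\mu_\Gamma(M)$ and the normalization $\int\Psi_\epsilon\,d\mu_\Gamma=\int g\,d\vecx$ hold regardless. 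Second, the tightness you defer to Dani--Margulis actually comes for free from the compactly supported case: applying the already-proved limit to $F\in C_c$ with $0\leq F\leq 1$ and $\int F\,d\mu_\Gamma\geq 1-\epsilon$ shows that at most mass $\epsilon$ of $(n_-(\,\cdot\,)\Phi^t)_*\lambda$ escapes a fixed compact set as $t\to\infty$; quoting quantitative non-divergence is correct but unnecessary.
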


Here
$\mu_{\Gamma}$ denotes the Haar measure on $G=\SLR$, normalized so that it represents the unique right $G$-invariant probability measure on the homogeneous space $\GamG$. In the special case $\Gamma=\SLZ$ we have by Siegel's volume formula
\begin{equation} \label{siegel}
d\mu_{\SLZ}(M) \frac{dt}t = \big(\zeta(2)\zeta(3)\cdots\zeta(d)\big)^{-1}\;\det (X)^{-d}
\prod_{i,j=1}^d dX_{ij},
\end{equation}
where $X=(X_{ij})=t^{1/d}M \in \GL^+(d,\RR)$ with $M\in G$, $t>0$; cf.~\cite{siegel}.

In \cite{frobenius} I studied the case where the absolutely continuous measure $\lambda$ is replaced by equally weighted point masses at the elements of the Farey sequence
\begin{equation}
	\scrF_Q=\bigg\{ \frac{\vecp}{q} \in[0,1)^{d-1} : (\vecp,q)\in\hatZZ^d, \; 0<q\leq Q \bigg\} ,
\end{equation}
where $\hatZZ^d$ denotes the set of primitive lattice points 
\begin{equation}
	\hatZZ^d=\{ (m_1,\ldots,m_d)\in\ZZ^d : \gcd(m_1,\ldots,m_d)=1 \}.
\end{equation}
It will be notationally convenient to also allow noninteger $Q\in\RR_{\geq 1}$. Note that $\scrF_Q=\scrF_{[Q]}$ where $[Q]$ is the integer part of $Q$.

The discussion in \cite{frobenius} is restricted to the case $\Gamma=\SLZ$, and the purpose of the present note is to describe the situation for a general lattice. 

Define the subgroups
\begin{equation}\label{Hdef}
\begin{split}
	H & = \bigg\{ M\in G :  (\vecnull,1)M =(\vecnull,1) \bigg\}\\
	& = \bigg\{ \begin{pmatrix} A  & \trans\vecb \\ \vecnull & 1 \end{pmatrix} : A\in \SL(d-1,\RR),\; \vecb\in\RR^{d-1} \bigg\},
\end{split}
\end{equation}
and
\begin{equation}
	\Gamma_H = \SLZ \cap H  = \bigg\{ \begin{pmatrix} \gamma & \trans\vecm \\ \vecnull & 1 \end{pmatrix} : \gamma\in\SL(d-1,\ZZ),\; \vecm\in\ZZ^{d-1} \bigg\} .
\end{equation}
Note that $H$ and $\Gamma_H$ are isomorphic to $\ASL(d-1,\RR)$ and $\ASL(d-1,\ZZ)$, respectively. We normalize the Haar measure $\mu_H$ of $H$ so that it becomes a probability measure on $\GamH$; explicitly:
\begin{equation} \label{siegel2}
d\mu_H(M) = d\mu_{\SL(d-1,\ZZ)}(A)\, d\vecb, \qquad M=\begin{pmatrix} A & \trans\vecb \\ \vecnull & 1 \end{pmatrix}.
\end{equation}

\begin{thm}\label{equiThm1}
Let $\Gamma$ be a lattice in $\SLR$, $\sigma\in\RR$, $f:[0,1]^{d-1}\times\GamG\to\RR$ be bounded continuous, and $Q=\e^{(d-1)(t-\sigma)}$. 
\begin{enumerate}
	\item[(A)] If $\Gamma$ {\bf is not} commensurable with $\SLZ$, then 
\begin{equation}
	\lim_{t\to\infty} \frac{1}{|\scrF_Q|} \sum_{\vecr\in\scrF_Q} f\big(\vecr,n_-(\vecr)\Phi^{t}\big)  = \int_{[0,1]^{d-1}\times\GamG} f(\vecx,M) \, d\vecx \, d\mu_\Gamma(M) .
\end{equation}
	\item[(B)] If $\Gamma$ {\bf is} commensurable with $\SLZ$, then 
\begin{multline}\label{equiThm1eq}
	\lim_{t\to\infty} \frac{1}{|\scrF_Q|} \sum_{\vecr\in\scrF_Q} f\big(\vecr,n_-(\vecr)\Phi^{t}\big)  \\
	= d(d-1)\e^{d(d-1)\sigma} \int_{\sigma}^\infty \int_{[0,1]^{d-1}\times\GamH} \overline f(\vecx, M \Phi^{-s}) \, d\vecx \, d\mu_H(M) \, \e^{-d(d-1)s} ds 
\end{multline}
with $\Delta:=\Gamma\cap\SLZ$ and
\begin{equation}
	\overline f(\vecx, M):=\frac{1}{|\SLZ:\Delta|} \sum_{\gamma\in\SLZ/\Delta}    f(\vecx,\gamma \trans M^{-1}).
\end{equation}
\end{enumerate}
\end{thm}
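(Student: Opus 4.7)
The plan is to reduce the Farey sum to a horospherical equidistribution problem via an explicit factorization of $n_-(\vecp/q)\Phi^t$ modulo $\SLZ$.

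Given primitive $(\vecp,q)\in\hatZZ^d$ with $q>0$, pick any $\gamma=\begin{pmatrix}B & \trans\vecc\\ \vecp & q\end{pmatrix}\in\SLZ$; this $\gamma$ is unique up to left multiplication by $\Gamma_H$. A direct block calculation, using the Schur-complement identity $\det(B-\trans\vecc\vecp/q)=q^{-1}$, shows that
\[
 h(\gamma):=\gamma\,n_+(-\vecp/q)\,\Phi^{-(\log q)/(d-1)}
 =\begin{pmatrix} q^{1/(d-1)}(B-\trans\vecc\vecp/q) & \trans\vecc/q\\ \vecnull & 1\end{pmatrix}
\]
lies in $H$, and the coset $\Gamma_H h(\gamma)\in\GamH$ depends only on $(\vecp,q)$. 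Setting $s:=t-(\log q)/(d-1)$ and taking the transpose-inverse of $\gamma\,n_+(-\vecp/q)\Phi^{-t}=h(\gamma)\Phi^{-s}$ yields the key identity
\[
 n_-(\vecp/q)\,\Phi^t = \trans\gamma\cdot\trans h(\gamma)^{-1}\,\Phi^s\qquad\text{in }G.
\]
Writing $S(t)$ for the normalized Farey average in (\ref{equiThm1eq}) and partitioning by $q$, insertion of this identity gives
\[
 |\scrF_Q|\,S(t)=\sum_{q=1}^{[Q]}\sum_{\substack{\vecp\in[0,q)^{d-1}\\ \gcd(\vecp,q)=1}} f\bigl(\vecp/q,\;\trans\gamma\,\trans h(\gamma)^{-1}\,\Phi^{s(q)}\bigr),\qquad s(q):=t-\frac{\log q}{d-1}.
\]

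The crucial input is the joint equidistribution, as $q\to\infty$, of the triples $(\vecp/q,\,\Gamma_H h(\gamma),\,\Delta\,\trans\gamma)$ in $[0,1)^{d-1}\times\GamH\times(\Delta\backslash\SLZ)$ against Lebesgue $\times\,\mu_H\,\times$ uniform counting on $\Delta\backslash\SLZ$. This refines the Farey equidistribution of \cite{frobenius} (case $\Gamma=\SLZ$) and is obtained by lifting the horosphere equidistribution of Theorem~\ref{equiThm0} to the cover $\Delta\backslash G$, the factor $\Phi^{-s}$ in the identity exposing the relevant large unstable $H$-orbit. In case (B), $[\SLZ:\Delta]<\infty$, the inner sum splits across the finitely many $\Delta$-cosets of $\trans\gamma$, and $\Gamma$-averaging $f$ over those cosets produces exactly the function $\overline f$; the inner sum for fixed $q$ is then asymptotic to
\[
 J_{d-1}(q)\int_{[0,1]^{d-1}\times\GamH}\overline f(\vecx,M\,\Phi^{-s(q)})\,d\vecx\,d\mu_H(M),\qquad J_{d-1}(q):=\#\{\vecp\in[0,q)^{d-1}:\gcd(\vecp,q)=1\}.
\]
In case (A), $[\SLZ:\Delta]=\infty$ and $\Gamma\,\trans\gamma$ equidistributes in $\GamG$ against $\mu_\Gamma$ (by Ratner/mixing for $\SLZ$-orbits on $\GamG$); the inner limit becomes an $s$-independent integral of $f$ over $\GamG$, whose subsequent $s$-integral collapses to the isotropic right-hand side of (A).

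For the outer sum, Abel summation with $\sum_{q\leq Q}J_{d-1}(q)=|\scrF_Q|\sim Q^d/(d\zeta(d))$ and the substitution $q=\e^{(d-1)(t-s)}$ (so $q^{d-1}\,dq=(d-1)\,\e^{d(d-1)(t-s)}\,ds$) convert $\sum_{q=1}^{[Q]}$ into a Riemann sum in $s\in[\sigma,t]$ which converges as $t\to\infty$ to the integral over $[\sigma,\infty)$ on the right-hand side of (\ref{equiThm1eq}); the prefactor $d(d-1)\e^{d(d-1)\sigma}$ emerges from $\e^{d(d-1)t}/Q^d$ together with the Jacobian $(d-1)$, and the lower limit $\sigma$ from $q\leq Q=\e^{(d-1)(t-\sigma)}$. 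The chief obstacle is the joint equidistribution of the previous paragraph, in particular decoupling $\Delta\,\trans\gamma$ from the $(\vecp/q,\Gamma_H h(\gamma))$-part: in case (B) this is handled by horospherical equidistribution on the common finite cover $\Delta\backslash G$, while in case (A) one requires the stronger $\SLZ$-orbit equidistribution on $\GamG$.
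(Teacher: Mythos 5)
Your algebraic reduction is correct: the factorization $n_-(\vecp/q)\Phi^t=\trans\gamma\,\trans h(\gamma)^{-1}\Phi^{s}$ with $h(\gamma)\in H$, $s=t-(\log q)/(d-1)$, is exactly the transpose-inverse of the coset decomposition the paper uses (via the bijection $\Gamma_H\backslash\SLZ\to\hatZZ^d$), and your Jacobian bookkeeping reproduces the factor $d(d-1)\e^{d(d-1)\sigma}$. But the proof is not complete, because the two statements you label as "the crucial input" are asserted rather than proved, and they are precisely the hard content. First, the joint equidistribution of $(\vecp/q,\Gamma_H h(\gamma))$ in $[0,1)^{d-1}\times\GamH$ (with or without the $\Delta$-coset of $\trans\gamma$) does not follow by "lifting Theorem \ref{equiThm0} to $\Delta\backslash G$": the Farey points form a set of measure zero on the horosphere, so equidistribution of the full horosphere says nothing about them without additional work. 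The paper bridges this gap by thickening each Farey point to a ball of radius $\epsilon\e^{-dt}$, identifying the thickened set with $\{\vecx:\veca\,n_+(\vecx)\Phi^{-t}\in\fC_\epsilon\}$, proving via Mahler's compactness criterion that the translates $\gamma\scrH_\epsilon^1$ are disjoint on compacta, and then doing an explicit volume computation in the coordinates $H\times\fC_\epsilon$; none of this (or a substitute for it) appears in your argument. Moreover your per-fixed-$q$ asymptotic for the inner sum is an even stronger claim (equidistribution of the denominator-$q$ Farey lifts in $\GamH$), which is not needed and not proved; the paper deliberately works with the whole block $\theta Q<q\leq Q$ at once to avoid it.

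Second, in Case (A) the phrase "by Ratner/mixing for $\SLZ$-orbits on $\GamG$" does not deliver what you need. The required statement is a joint equidistribution on $\GamG\times(\SLZ\backslash G)$ that decouples the $\Gamma$-component of $n_-(\vecx)\Phi^t$ from the $\SLZ$-component carrying the Farey structure; this is exactly Theorem \ref{equiThm2} of the paper, and its proof is not a citation of mixing but an application of Shah's theorem \cite{Shah96} together with a Ratner orbit-closure argument \cite{Ratner91}: one must show that for incommensurable $\Gamma,\SLZ$ the closure of the diagonal $G$-orbit in $(\Gamma\times\SLZ)\backslash(G\times G)$ is everything, using that any intermediate closed subgroup $K\supsetneq\{(g,g)\}$ contains a normal subgroup $\{1\}\times L$ and hence, by simplicity of $G$ modulo its center, is all of $G\times G$. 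Incommensurability enters exactly here (if the orbit closure were the diagonal itself, the lattices would be commensurable), so your Case (A) cannot be waved through as a known equidistribution fact; as written, the proposal assumes the theorem's main new ingredient.
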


Note that in the above theorem we place 
\begin{equation}\label{asymQ}
	|\scrF_Q| \sim \frac{Q^d}{d\,\zeta(d)} \qquad (Q\to\infty) 
\end{equation}
points on a horosphere of volume $\e^{d(d-1) t}$. The scaling $Q=\e^{(d-1)(t-\sigma)}$ thus ensures that the average density of points on the horosphere remains constant as $t\to\infty$. If instead we had taken a scaling such that $Q \e^{-(d-1)t}\to\infty$, the Farey points would equidistribute also in Case (B) on all of $\GamG$ with respect to $d\mu$.

An interesting application of Case (A) is the following.

{\sc Corollary.}
{\em Let $\Gamma=\SLZ$, $\sigma\in\RR$, $f:[0,1]^{d-1}\times\GamG\to\RR$ be bounded continuous, and $Q=\e^{(d-1)(t-\sigma)}$. If the matrix $A\in\GL(d-1,\RR)$ has at least one irrational coefficient, then
\begin{equation}
	\lim_{t\to\infty} \frac{1}{|\scrF_Q|} \sum_{\vecr\in\scrF_Q} f\big(\vecr,n_-(\vecr A)\Phi^{t}\big)  = \int_{[0,1]^{d-1}\times\GamG} f(\vecx,M) \, d\vecx \, d\mu_\Gamma(M) .
\end{equation}
}

Note that
\begin{equation}
n_-(\vecr A) \Phi^t =	\begin{pmatrix} \trans A & \vecnull \\ \vecnull & 1 \end{pmatrix} n_-(\vecr) \Phi^t \begin{pmatrix} \trans A^{-1} & \vecnull \\ \vecnull & 1 \end{pmatrix} .
\end{equation}
The corollary therefore follows from Theorem \ref{equiThm1} by choosing the test function
\begin{equation}
	f_A(\vecx,M):= f\bigg(\vecx, \begin{pmatrix} \trans A & \vecnull \\ \vecnull & 1 \end{pmatrix} M \begin{pmatrix} \trans A^{-1} & \vecnull \\ \vecnull & 1 \end{pmatrix}\bigg) ,
\end{equation}
which is left invariant under the action of the lattice
\begin{equation}
	\Gamma_A = \begin{pmatrix} \trans A & \vecnull \\ \vecnull & 1 \end{pmatrix} \SLZ \begin{pmatrix} \trans A^{-1} & \vecnull \\ \vecnull & 1 \end{pmatrix}.
\end{equation}
Since $A$ is irrational, $\Gamma_A$ is not commensurable with $\SLZ$, and hence Case (A) applies.

The proof of Theorem \ref{equiThm1}, Case (B) utilizes Theorem \ref{equiThm0}, and thus follows (as we will see) the same argument as in \cite{frobenius} for $\Gamma=\SLZ$. Although the answer looks simpler in Case (A), the proof is more involved. The central step is the following equidistribution statement. 

\begin{thm}\label{equiThm2}
Let $\Gamma,\Gamma'$ be two incommensurable lattices in $G=\SLR$, $\lambda$ be a Borel probability measure on $\RR^{d-1}$, absolutely continuous with respect to Lebesgue measure, and let $f:\RR^{d-1}\times \GamG\times\Gamma'\backslash G\to\RR$ be bounded continuous. Then
\begin{multline}
	\lim_{t\to\infty} \int_{\RR^{d-1}} f\big(\vecx,n_-(\vecx)\Phi^{t},n_-(\vecx)\Phi^{t}\big)\, d\lambda(\vecx)  \\ = \int_{\RR^{d-1}\times\GamG\times\Gamma'\backslash G} f(\vecx,M,M') \, d\lambda(\vecx) \, d\mu_\Gamma(M)\,d\mu_{\Gamma'}(M') .
\end{multline}
\end{thm}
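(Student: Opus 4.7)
The strategy is to lift the problem to the product setting. Let $\tilde G:=G\times G$ and $\tilde\Gamma:=\Gamma\times\Gamma'$; then $\tilde\Gamma$ is a lattice in $\tilde G$, the Haar probability measure on $\tilde\Gamma\backslash\tilde G$ factors as $\mu_\Gamma\otimes\mu_{\Gamma'}$, and the diagonal flow $\tilde\Phi^t=(\Phi^t,\Phi^t)$ has expanding horospherical subgroup $N_-\times N_-\cong\RR^{2(d-1)}$, containing the diagonally embedded $\{(n_-(\vecx),n_-(\vecx))\}$ as a proper $(d-1)$-dimensional unipotent subgroup. By Stone--Weierstrass and a tightness argument based on the fact that $\mu_\Gamma,\mu_{\Gamma'}$ are probability measures, I would first reduce to product test functions $f(\vecx,M,M')=\phi(\vecx)\psi(M)\chi(M')$ with $\psi,\chi$ uniformly continuous and compactly supported.

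The main step would then be a thickening argument. For $\delta>0$, fix a smooth bump $\psi_\delta\geq 0$ on $\RR^{d-1}$ with $\int\psi_\delta=1$ and $\supp\psi_\delta\subset B_\delta(\vecnull)$, and consider
\[
\tilde I_\delta(t):=\iint \phi(\vecx)\,\psi(n_-(\vecx)\Phi^t)\,\chi(n_-(\vecy)\Phi^t)\,\psi_\delta(\vecy-\vecx)\,d\vecy\,d\lambda(\vecx).
\]
For each fixed $\vecx$ the inner integral $J(\vecx,t):=\int\chi(n_-(\vecy)\Phi^t)\psi_\delta(\vecy-\vecx)\,d\vecy$ tends to $c:=\int\chi\,d\mu_{\Gamma'}$ by Theorem~\ref{equiThm0} applied to $\Gamma'$ with the absolutely continuous probability measure $\psi_\delta(\vecy-\vecx)\,d\vecy$. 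Splitting
\[
\tilde I_\delta(t)=\int\phi(\vecx)\,\psi(n_-(\vecx)\Phi^t)\bigl(J(\vecx,t)-c\bigr)d\lambda(\vecx)+c\int\phi(\vecx)\,\psi(n_-(\vecx)\Phi^t)\,d\lambda(\vecx),
\]
the first summand tends to zero by dominated convergence and the second to $c\int\phi\,d\lambda\int\psi\,d\mu_\Gamma=:L$ by Theorem~\ref{equiThm0} applied to $\Gamma$, giving $\lim_t\tilde I_\delta(t)=L$ for every $\delta>0$. Notice that this step uses nothing about the relation between $\Gamma$ and $\Gamma'$.

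The \textbf{main obstacle} will be to transfer this to the diagonal integral $I(t)$ in the statement, i.e.\ to show $|I(t)-\tilde I_\delta(t)|\to 0$ in a suitable joint limit $\delta\to 0$, $t\to\infty$. Using the identity $n_-(\vecx+\vecz)\Phi^t=n_-(\vecx)\Phi^t\,n_-(\e^{dt}\vecz)$ the difference rewrites as an average of $\chi(M_t(\vecx))-\chi(M_t(\vecx)\,n_-(\vecw))$, with $M_t(\vecx):=n_-(\vecx)\Phi^t$, against a probability density on $\RR^{d-1}$ supported in a ball of radius $\delta\e^{dt}$. This encodes the correlation between the trajectory $\psi(M_t(\vecx))$ in the $\Gamma$-quotient and $\chi(M_t(\vecx))$ in the $\Gamma'$-quotient, and forcing this correlation to vanish is exactly where the incommensurability of $\Gamma$ and $\Gamma'$ enters essentially. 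By Ratner's measure classification applied to the unipotent diagonal horocycle acting on $\tilde\Gamma\backslash\tilde G$, the orbit closure of every point is all of $\tilde\Gamma\backslash\tilde G$ precisely when $\tilde\Gamma\cap\{(g,g):g\in G\}=\{(\gamma,\gamma):\gamma\in\Gamma\cap\Gamma'\}$ fails to be a lattice in the diagonal copy of $G$, so the joint limit distribution must coincide with $\mu_\Gamma\otimes\mu_{\Gamma'}$. Making this rigorous---either through an effective form of Theorem~\ref{equiThm0} that lets $\delta\to 0$ simultaneously with $t\to\infty$, or via a direct appeal to Ratner's equidistribution theorem for the diagonal horocycle orbits---is the technical heart of the argument. (In the commensurable case the diagonal horocycle orbit is trapped on a proper closed subset and a genuinely different limit arises, as quantified by Case (B) of Theorem~\ref{equiThm1}.)
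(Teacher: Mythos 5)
There is a genuine gap, and you have in fact located it yourself: the thickening computation for $\tilde I_\delta(t)$ proves a statement that is true for \emph{any} pair of lattices (as you note, it "uses nothing about the relation between $\Gamma$ and $\Gamma'$"), while the entire content of the theorem sits in the transfer $|I(t)-\tilde I_\delta(t)|\to 0$, which you leave open. This transfer cannot be a technicality to be fixed by an effective version of Theorem \ref{equiThm0} or by letting $\delta\to 0$ slowly: the displacement in your rewriting is $n_-(\vecw)$ with $\|\vecw\|$ up to $\delta\e^{dt}\to\infty$, and in the commensurable case the diagonal limit genuinely differs from $\bigl(\int\phi\,d\lambda\bigr)\bigl(\int\psi\,d\mu_\Gamma\bigr)\bigl(\int\chi\,d\mu_{\Gamma'}\bigr)$ (Case (B) of Theorem \ref{equiThm1b}), so no argument that ignores the arithmetic of $\Gamma,\Gamma'$ can close the gap. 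Your closing paragraph gestures at the right mechanism but supplies neither of its two ingredients: (i) a proof that the relevant orbit is dense, and (ii) a theorem converting density into equidistribution of the expanding translates weighted by an absolutely continuous $\lambda$. On (i), what is needed (and what the paper proves) is density in $\tilde\Gamma\backslash\tilde G$ of the orbit of the full diagonal copy $H=\{(M,M):M\in G\}$ of $\SLR$, not an unproved "precisely when" claim about orbit closures of every point under the diagonal horocycle; the paper gets this from Ratner's orbit-closure theorem plus an algebraic step: the closure is $\tilde\Gamma\backslash\tilde\Gamma K$ with $H\leq K\leq\tilde G$ closed connected, $K=H$ forces commensurability, and if $K\neq H$ then $K\cap(\{1\}\times G)=\{1\}\times L$ with $L$ nontrivial and normal in $G$, whence simplicity of $G$ modulo its finite center gives $K=\tilde G$. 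On (ii), Ratner's equidistribution theorem for individual unipotent orbits does not by itself yield the limit of $\int f(n_-(\vecx)\Phi^t,\dots)\,d\lambda(\vecx)$ as $t\to\infty$; the paper invokes Shah's theorem on limit distributions of expanding translates (Theorem \ref{thmShah}) applied to the diagonal embedding, and then handles the $\vecx$-dependence of the test function as in the proof of Theorem 5.3 of \cite{partI}. As it stands, your proposal reduces the theorem to exactly the statement to be proved; the product-space setup and the role of incommensurability are correctly identified, but the decisive steps are missing.
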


We will prove Theorem \ref{equiThm2} in Section \ref{secTwo}, and Theorem \ref{equiThm1} in Section \ref{secThree}.

It is well known that the Farey fractions correspond to the cusps of the space of lattices, $\SLSL$. A further interesting generalization (which we will not discuss here) is therefore to replace the Farey sequence by 
\begin{equation}
	\scrF_Q=\bigg\{ \frac{\vecp}{q} \in \scrR : (\vecp,q)\in (\vecnull,1)\Gamma', \; 0<q\leq Q \bigg\} ,
\end{equation}
where $\Gamma$ is a lattice in $\SLR$ with the property that $\Gamma_H=H\cap \Gamma'$ is a lattice in $H$. The set $\scrR\subset\RR^{d-1}$ is the pre-image of a fundamental domain of $\Gamma_H$ in $H$ under the map $\vecx\mapsto n_-(\vecx)$.

\section{Proof of Theorem \ref{equiThm2}}\label{secTwo}

Theorem \ref{equiThm2} is a consequence of Shah's Theorem 1.4 in \cite{Shah96}, which in turn follows from Ratner's theorem on the classification of measures that are invariant under unipotent flows \cite{Ratner91}.

\begin{thm}\label{thmShah}
Let $\tilde G$ be a connected Lie group and let $\tilde\Gamma$ be a lattice in $\tilde G$. Suppose $\tilde G$ contains a Lie subgroup $H$ isomorphic to $\SLR$ (we denote the corresponding embedding by $\varphi:\SLR\to \tilde G$), such that the set $\tilde\Gamma\backslash\tilde\Gamma H$ is dense in $\tilde\Gamma\backslash\tilde G$. Let $\lambda$ be a Borel probability measure on $\RR^{d-1}$ %
which is absolutely continuous with respect to Lebesgue measure, and let $f:\tilde\Gamma\backslash\tilde G\to\RR$ be bounded continuous.
Then
\begin{equation}
	\lim_{t\to\infty} \int_{\RR^{d-1}} f(\varphi( n_-(\vecx) \Phi^t)) d\lambda(\vecx)
	= \int_{\tilde\Gamma\backslash\tilde G} f \, d\tilde\mu,
\end{equation}
where $\tilde \mu$ is the unique $\tilde G$-right-invariant probability measure on $\tilde\Gamma\backslash\tilde G$.
\end{thm}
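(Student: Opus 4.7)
This statement is Shah's Theorem 1.4 in \cite{Shah96}, and my plan is to recapitulate his proof, whose engine is Ratner's measure classification for unipotent flows together with the Dani--Margulis non-divergence theorem. First I would rewrite the integrand via $n_-(\vecx)\Phi^t = \Phi^t n_-(\e^{dt}\vecx)$, so that
\[
\varphi(n_-(\vecx)\Phi^t) = \varphi(\Phi^t)\,\varphi(n_-(\e^{dt}\vecx)) \in \varphi(\Phi^t)\,U,
\]
where $U := \varphi(\{n_-(\vecy) : \vecy\in\RR^{d-1}\})$ is a unipotent subgroup of $\tilde G$. The pushforward $\mu_t$ of $\lambda$ under $\vecx\mapsto\tilde\Gamma\varphi(n_-(\vecx)\Phi^t)$ is then a probability measure on $\tilde\Gamma\backslash\tilde G$ supported on a long stretched piece of a single $U$-orbit, and the goal is to show every weak-$*$ subsequential limit $\mu$ of $\{\mu_t\}$ coincides with $\tilde\mu$; after a routine approximation I may assume $\lambda$ has smooth compactly supported density.

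Next I would establish two standard inputs. For \emph{asymptotic $U$-invariance}, fix $\vech \in \RR^{d-1}$ and observe that the parameter shift $\vecx \mapsto \vecx + \e^{-dt}\vech$ corresponds exactly to right-multiplication of $\varphi(n_-(\vecx)\Phi^t)$ by $\varphi(n_-(\vech))$; since $\lambda$ is absolutely continuous, this shift perturbs $\lambda$ by $o(1)$ in total variation as $t\to\infty$, so each weak-$*$ limit $\mu$ is invariant under right-translation by every element of $U$. For \emph{non-divergence}, the Dani--Margulis uniform non-divergence theorem applied to the stretched unipotent piece $\varphi(n_-(\e^{dt}\vecx))$ ensures that no mass of $\{\mu_t\}$ escapes to the cusps of $\tilde\Gamma\backslash\tilde G$, so $\{\mu_t\}$ is tight and subsequential limits remain probability measures.

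Finally, I would invoke Ratner's theorem to decompose the $U$-invariant measure $\mu$ into ergodic components, each supported on a closed orbit $\tilde\Gamma g H_1$ for some closed connected subgroup $H_1\supseteq U$ with $H_1\cap g^{-1}\tilde\Gamma g$ a lattice, and carrying the corresponding Haar measure. It then remains to exclude all components with $H_1\subsetneq\tilde G$, and \emph{this is the main obstacle}. Here the density hypothesis enters via the linearization technique of Dani--Margulis and Mozes--Shah: positive $\mu$-mass on a singular closed orbit $\tilde\Gamma g H_1$ would force a correspondingly large set of parameters $\vecx$ whose trajectory hugs $\tilde\Gamma g H_1$, and denseness of $\tilde\Gamma\backslash\tilde\Gamma\varphi(\SLR)$ in $\tilde\Gamma\backslash\tilde G$ lets one show that this exceptional parameter set has $\lambda$-measure tending to $0$. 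Consequently no singular component survives and $\mu = \tilde\mu$; as every subsequential limit agrees, the full family $\mu_t$ converges weakly to $\tilde\mu$.
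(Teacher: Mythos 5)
The paper offers no proof of this statement at all --- it is quoted as Shah's Theorem 1.4 in \cite{Shah96}, which in turn rests on Ratner's measure classification --- so the only meaningful comparison is with Shah's own argument, and your outline follows it faithfully: asymptotic invariance of weak-$*$ limits under the expanded unipotent group $U$ (using absolute continuity of $\lambda$ and the identity $n_-(\vecx+\e^{-dt}\vech)\Phi^t=n_-(\vecx)\Phi^t n_-(\vech)$), Dani--Margulis nondivergence for tightness, Ratner's classification of the ergodic components, and the linearization technique together with the density hypothesis on $\tilde\Gamma\backslash\tilde\Gamma H$ to rule out singular components. The only caveat is that the decisive linearization step is described qualitatively rather than carried out, but since the paper treats the entire theorem as a black-box citation, your sketch is consistent with (and no less detailed than) the route the paper relies on.
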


To use this result for the proof of Theorem \ref{equiThm2}, we take 
\begin{equation}
	\tilde G=G\times G, \qquad \tilde\Gamma=\Gamma\times\Gamma', \qquad \tilde\mu=\mu_\Gamma\times\mu_{\Gamma'},
\end{equation}
\begin{equation}
	H=\{ (M,M) : M\in G\}
\end{equation}
and $\varphi$ the diagonal embedding. It follows from Ratner's theory \cite{Ratner91} that the closure of $\tilde\Gamma\backslash\tilde\Gamma H$ equals $\tilde\Gamma\backslash\tilde\Gamma K$ for some closed connected subgroup $K$ with $H\leq K\leq \tilde G$. If $H=K$ then $\Gamma$ and $\Gamma'$ are commensurable, which contradicts our assumption. 

Let us therefore suppose $H\neq K$. Then $K$ contains a subgroup $\{ 1 \} \times L=K\cap (\{1\}\times G)$ where $\{1\}<L\leq G$. Since 
\begin{equation}
	(g,g)(1,h)(g,g)^{-1}=(1, g h g^{-1})
\end{equation}
for all $g\in G$, $L$ is in fact normal in $G$. Let $Z$ denote the (finite) center of $G$. Since $G/Z$ is simple, this implies $LZ/Z=G/Z$ and thus $LZ=G$. Therefore $\{1\}\times G\subset \tilde K:=K\tilde Z$, where $\tilde Z=\{1\}\times Z$. Together with $H\subset K$ this yields $\tilde K=G\times G=\tilde G$. But since $K$, $\tilde G$ are connected and $\tilde Z$ is finite, we have in fact $K=\tilde G$. This completes the proof of Theorem \ref{equiThm2} when the test function $f(\vecx,M,M')$ is independent of $\vecx$. The general case follows from the same argument as in the proof of Theorem 5.3 in \cite{partI}.

\section{Proof of Theorem \ref{equiThm1}}\label{secThree}

In the following set $\Gamma'=\SLZ$. The proof is virtually identical to that of Theorem 6 in \cite{frobenius}, except for the application of Theorem \ref{equiThm2} rather than Theorem \ref{equiThm0} in the incommensurable case. It will in fact be easier to prove the following generalization of Theorem \ref{equiThm1}. Theorem \ref{equiThm1} is then obtained from Theorem \ref{equiThm1b} below by choosing a test function of the form $f(\vecx,M)=f(\vecx,M,M')$.

\begin{thm}\label{equiThm1b}
Let $\Gamma$ be a lattice in $\SLR$, $\sigma\in\RR$, $f:[0,1]^{d-1}\times\GamG\times\Gamma'\backslash G\to\RR$ be bounded continuous, and $Q=\e^{(d-1)(t-\sigma)}$. 
\begin{enumerate}
	\item[(A)] If $\Gamma$ {\bf is not} commensurable with $\Gamma'$, then 
\begin{multline}
	\lim_{t\to\infty} \frac{1}{|\scrF_Q|} \sum_{\vecr\in\scrF_Q} f\big(\vecr,n_-(\vecr)\Phi^{t},n_-(\vecr)\Phi^{t}\big)  \\ = d(d-1)\e^{d(d-1)\sigma} \int_{\sigma}^\infty  \int_{[0,1]^{d-1}\times\GamG\times\GamH} \widetilde f(\vecx,M, M' \Phi^{-s}) \times \\ \times d\vecx \, d\mu_\Gamma(M) \, d\mu_H(M') \, \e^{-d(d-1)s} ds
\end{multline}
with
\begin{equation}
	\widetilde f(\vecx,M,M'):=f(\vecx,M,\trans {M'}^{-1}).
\end{equation}
	\item[(B)] If $\Gamma$ {\bf is} commensurable with $\Gamma'$, then 
\begin{multline}\label{equiThm1eqb}
	\lim_{t\to\infty} \frac{1}{|\scrF_Q|} \sum_{\vecr\in\scrF_Q} f\big(\vecr,n_-(\vecr)\Phi^{t},n_-(\vecr)\Phi^{t}\big)  \\
	= d(d-1)\e^{d(d-1)\sigma} \int_{\sigma}^\infty \int_{[0,1]^{d-1}\times\GamH} \overline f(\vecx, M \Phi^{-s}, M \Phi^{-s}) \times \\ \times d\vecx \, d\mu_H(M) \, \e^{-d(d-1)s} ds 
\end{multline}
with $\Delta:=\Gamma\cap\Gamma'$ and
\begin{equation}
	\overline f(\vecx, M,M'):=\frac{1}{|\Gamma':\Delta|} \sum_{\gamma\in\Gamma'/\Delta} f(\vecx,\gamma \trans M^{-1},\trans {M'}^{-1}) .
\end{equation}
\end{enumerate}
\end{thm}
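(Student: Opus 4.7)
Plan. The author flags Theorem~\ref{equiThm1b} as a direct adaptation of Theorem~6 in \cite{frobenius} (the case $\Gamma=\Gamma'=\SLZ$), with Theorem~\ref{equiThm2} replacing Theorem~\ref{equiThm0} in Case~A. I would follow that template: reformulate the Farey sum as a horospherical average via an explicit matrix parametrization, then invoke the appropriate equidistribution theorem.

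\emph{Parametrization and horospherical identity.} The right $\SLZ$-orbit of $(\vecnull,1)$ is $\hatZZ^d$ with stabilizer $\Gamma_H$ under left multiplication, so primitive vectors $(\vecp,q)$ with $q>0$ are in bijection with $\Gamma_H\backslash\SLZ$; the Farey box condition $\vecp/q\in[0,1)^{d-1}$ selects one representative per $\ZZ^{d-1}$-translate of $\vecp$. For each coset pick $\gamma=\bigl(\begin{smallmatrix}A & \trans\vecb\\ \vecp & q\end{smallmatrix}\bigr)\in\SLZ$, and set $\tau=(\log q)/(d-1)$, $s=t-\tau$, so $q=\e^{(d-1)(t-s)}$ and the cutoff $q\leq Q$ becomes $s\geq\sigma$. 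A direct matrix computation, with unimodularity verified via the Schur-complement identity $\det(A-q^{-1}\trans\vecb\,\vecp)=1/q$, shows that $\Phi^{-\tau}n_-(-\vecp/q)\trans\gamma\in\trans H$. Writing this element as $\trans M_\gamma^{-1}$ for $M_\gamma\in H$ yields the key identity
\begin{equation*}
n_-(\vecp/q)\,\Phi^t \;=\; \trans\gamma\cdot\trans M_\gamma^{-1}\cdot\Phi^s,
\end{equation*}
which matches the $\trans(M\Phi^{-s})^{-1}$ structure inside $\overline f$ and $\widetilde f$. A short calculation gives $M_{\eta\gamma}=\eta M_\gamma$ for $\eta\in\Gamma_H$, so $\Gamma_H M_\gamma$ is a well-defined element of $\GamH$.

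\emph{Reduction and equidistribution.} Since $\trans\gamma\in\Gamma'$, the $\Gamma'$-slot simplifies immediately to $\Gamma'\trans M_\gamma^{-1}\Phi^s$. In Case~B, decompose $\trans\gamma=\delta\cdot\gamma_0(\gamma)$ with $\delta\in\Delta\subseteq\Gamma$ and $\gamma_0(\gamma)$ in a transversal of $\Gamma'/\Delta$; the $\Gamma$-slot then reduces to $\Gamma\gamma_0(\gamma)\trans M_\gamma^{-1}\Phi^s$, and a counting argument (using that $\trans\colon\SLZ\to\SLZ$ is a bijection respecting coset counts) should show $\gamma_0(\gamma)$ covers $\Gamma'/\Delta$ uniformly within each narrow $s$-bin, producing the $|\Gamma':\Delta|^{-1}\sum_{\gamma\in\Gamma'/\Delta}$ average in $\overline f$. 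In Case~A, $\trans\gamma$ cannot be absorbed into $\Gamma$, and one must track $(\Gamma\trans\gamma\cdot\trans M_\gamma^{-1}\Phi^s,\,\Gamma'\trans M_\gamma^{-1}\Phi^s)$ jointly in $\Gamma\backslash G\times\Gamma'\backslash G$ — which is exactly the setting of Theorem~\ref{equiThm2}. Partitioning the sum into $s$-bins and using the standard asymptotic count of primitive vectors with $q$ in a small range together with $|\scrF_Q|\sim Q^d/(d\zeta(d))$ produces the weight $d(d-1)\e^{d(d-1)\sigma}\e^{-d(d-1)s}\,ds$; applying Theorem~\ref{equiThm0} in Case~B (or Theorem~\ref{equiThm2} in Case~A) to the resulting horospherical integral then delivers the stated limit.

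The main obstacle is the combinatorial bookkeeping in the previous paragraph: converting the discrete Farey sum — broken up by the auxiliary variable $\gamma_0(\gamma)$ in Case~B — into the integral over $\Gamma'/\Delta$, $\GamH$, and $s\in[\sigma,\infty)$ with the precise density $\e^{-d(d-1)s}\,ds$, and showing that the horospheric coordinate $\Gamma_H M_\gamma$ equidistributes uniformly inside each $s$-bin. Once this bookkeeping is in place, the passage to the limit via the equidistribution theorems is routine.
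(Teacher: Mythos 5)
Your structural reduction (the bijection $\Gamma_H\backslash\Gamma'\leftrightarrow\hatZZ^d$, the identity writing $n_-(\vecp/q)\Phi^t$ as a $\Gamma'$-translate of an element of $\trans H^{-1}$ times $\Phi^s$ with $q=\e^{(d-1)(t-s)}$) matches the geometry the paper uses in \eqref{bij}, \eqref{HaHa2}--\eqref{My}. But the step you defer as ``combinatorial bookkeeping'' is not bookkeeping: it is the entire content of the theorem, and your plan for it does not work as stated. After your reduction, what remains is a \emph{discrete} sum over the Farey points within each $s$-bin, and the claim that the associated points $\Gamma_H M_\gamma$ (jointly with the $\Gamma$-slot in Case~A, and jointly with the coset label $\gamma_0(\gamma)\in\Gamma'/\Delta$ in Case~B) equidistribute in $\GamH$ with the density $\e^{-d(d-1)s}ds$. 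Theorems~\ref{equiThm0} and~\ref{equiThm2} cannot be ``applied to the resulting horospherical integral'', because their hypotheses require the measure $\lambda$ on the horosphere to be absolutely continuous with respect to Lebesgue; a sum of point masses at Farey fractions is exactly the situation they exclude. Likewise, ``the standard asymptotic count of primitive vectors'' only controls how many points fall in each bin, not how they distribute in $\GamH$ or how the cosets $\gamma_0(\gamma)$ are visited; both are unproven discrete equidistribution statements of essentially the same depth as the theorem itself.

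The paper bridges precisely this gap by a thickening argument: each Farey point is replaced by a ball of radius $\epsilon\e^{-dt}$ (the scale matched to the expansion rate $\e^{dt}$ in \eqref{expon}), so that the Farey sum becomes, up to a uniform-continuity error $\delta$, an integral of $f$ against Lebesgue measure restricted to the thickened set \eqref{FQeps0}. That set is recognized as the preimage under $\vecx\mapsto n_+(\vecx)\Phi^{-t}$ of the region $\scrH_\epsilon\subset G$, a union of translates $\gamma\scrH^1_\epsilon$ over $\Gamma'/\Gamma_H$; a disjointness lemma on compacta (Step~2, via Mahler's compactness criterion) shows the characteristic function of $\scrF_Q^\epsilon$ equals $\widetilde\chi_\epsilon\big(n_-(\vecx)\Phi^t\big)$ with $\chi_\epsilon$ a fixed function on $\Gamma'\backslash G$ whose discontinuity set has measure zero. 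Only then do Theorems~\ref{equiThm0} (applied to the lattice $\Delta=\Gamma\cap\Gamma'$, which after unfolding $d\mu_{\Gamma'}=|\Gamma':\Delta|\,d\mu_\Delta$ produces the average $\overline f$ automatically, with no coset-counting needed) and~\ref{equiThm2} become applicable, and the limit is the $\mu$-integral of $g\cdot\chi_\epsilon^1$ over $\Gamma_H\backslash G$. The $\e^{-d(d-1)s}\,ds\,d\mu_H$ weight then comes from an explicit volume disintegration $d\mu=\zeta(d)^{-1}d\mu_H\,d\vecy$ over $\scrH^1_\epsilon$ with $y_d=\e^{-(d-1)s}$, together with the approximation $M_\vecy\approx D(y_d)$, and a cutoff $\theta Q<q\leq Q$ (removed at the very end via \eqref{asymQ}) keeps everything inside a compact part of $\Gamma'\backslash G$. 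Without this thickening-plus-disjointness mechanism, or a substitute for it, your outline does not yield a proof.
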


\begin{proof}
{\bf Step 0: Uniform continuity.} By choosing the test function
\begin{equation}
	f(\vecx,M,M')=f_0(\vecx,M\Phi^{-\sigma},M'\Phi^{-\sigma})
\end{equation}
with $f_0:[0,1]^{d-1}\times\GamG\times\Gamma'\backslash G\to\RR$ bounded continuous, it is evident that we only need consider the case $\sigma=0$. We may also assume without loss of generality that $f$, and thus also $\widetilde f$ and $\overline f$, have compact support. That is, there is $\scrC\subset G$ compact such that $\supp f,\supp\widetilde f\subset[0,1]^{d-1}\times \Gamma\backslash\Gamma\scrC\times\Gamma'\backslash\Gamma'\scrC$. The generalization to bounded continuous functions follows from a standard approximation argument.

Since $f$ is continuous and has compact support, it is uniformly continuous. That is, given any $\delta>0$ there exists $\epsilon>0$ such that for all $(\vecx,M_1,M_1'),(\vecx',M_2,M_2')\in [0,1]^{d-1}\times G\times G$,
\begin{equation}\label{epsi}
	\|\vecx-\vecx'\| < \epsilon, \qquad d(M_1,M_1') < \epsilon , \qquad d(M_2,M_2') < \epsilon 
\end{equation}
implies
\begin{equation}\label{delt}
	\big| f(\vecx,M_1,M_2) - f(\vecx',M_1',M_2') \big| < \delta .
\end{equation}
Here $d$ denotes a left-invariant Riemannian metric on $G$. In the following, we choose $d$ in such a way that 
\begin{equation}\label{dchoice}
	d\big(n_\pm(\vecx),n_\pm(\vecx')\big)\leq \| \vecx-\vecx'\| ,
\end{equation}
where $\|\,\cdot\,\|$ the standard euclidean norm.

The plan is now to first establish \eqref{equiThm1eqb} for the set
\begin{equation}
	\scrF_{Q,\theta}=\bigg\{ \frac{\vecp}{q} \in[0,1)^{d-1} : (\vecp,q)\in\hatZZ^d, \; \theta Q<q\leq Q \bigg\} ,
\end{equation}
for any $\theta\in(0,1)$. The constant $\theta$ will remain fixed until the very last step of this proof.

{\bf Step 1: Thicken the Farey sequence.} The plan is to reduce the statement to Theorem \ref{equiThm0} (in the commensurable case) or Theorem \ref{equiThm2} (in the incommensurable case). To this end, we thicken the set $\scrF_{Q,\theta}$ as follows: For $\epsilon>0$ (we will in fact later use the $\epsilon$ from Step 0), let
\begin{equation}\label{FQeps0}
	\scrF_Q^\epsilon =  \bigcup_{\vecr\in\scrF_{Q,\theta}+\ZZ^{d-1}} \big\{ \vecx\in \RR^{d-1} : \big\| \vecx-\vecr \big\| < \epsilon\e^{-dt} \big\}  .
\end{equation}
Note that $\scrF_Q^\epsilon$ is symmetric with respect to $\vecx\mapsto-\vecx$.
A short calculation yields
\begin{equation}\label{FQeps}
	\scrF_Q^\epsilon = \bigcup_{\veca\in\hatZZ^d} \big\{ \vecx\in \RR^{d-1} :  \veca\,n_+(\vecx)\Phi^{-t}\in \fC_\epsilon \big\} ,
\end{equation}
where
\begin{equation}\label{Ceps}
	\fC_\epsilon= \big\{ (y_1,\ldots,y_d) \in\RR^d : \| (y_1,\ldots,y_{d-1}) \| < \epsilon y_d , \; \theta <y_d\leq 1 \big\} .
\end{equation}
Let
\begin{equation}\label{Ha}
	\scrH_\epsilon=\bigcup_{\veca\in\hatZZ^d} \scrH_\epsilon(\veca),\qquad 
	\scrH_\epsilon(\veca) = \big\{ M\in G:\veca M \in \fC_\epsilon \big\} .
\end{equation}
The bijection (cf.~\cite{siegel})
\begin{equation}\label{bij}
	\Gamma_H\backslash\Gamma' \to \hatZZ^d, \qquad \Gamma_H \gamma \mapsto (\vecnull,1) \gamma 
\end{equation}
allows us to rewrite
\begin{equation}\label{HaHa}
	\scrH_\epsilon=\bigcup_{\gamma\in\Gamma_H\backslash\Gamma'} \scrH_\epsilon((\vecnull,1)\gamma)=\bigcup_{\gamma\in\Gamma'/\Gamma_H} \gamma\scrH_\epsilon^1 ,
	\qquad\text{with } \scrH_\epsilon^1= \scrH_\epsilon((\vecnull,1)).
\end{equation}
Now
\begin{equation}\label{HaHa2}
\begin{split}
	\scrH_\epsilon^1 = & \big\{ M\in G: (\vecnull,1) M \in \fC_\epsilon \big\} \\
	= & H \big\{ M_\vecy : \vecy \in \fC_\epsilon \big\}
\end{split}
\end{equation}
with $H$ as in \eqref{Hdef}, and $M_\vecy\in G$ such that $(\vecnull,1) M_\vecy=\vecy$. Since $\vecy\in\fC_\epsilon$ implies $y_d>0$, we may choose
\begin{equation}\label{My}
	M_\vecy=\begin{pmatrix} y_d^{-1/(d-1)} 1_{d-1} & \trans\vecnull \\ \vecy' & y_d\end{pmatrix}, \qquad \vecy'=(y_1,\ldots,y_{d-1}).
\end{equation}

{\bf Step 2: Prove disjointness.} We will now prove the following claim: {\em Given a compact subset $\scrC\subset G$, there exists $\epsilon_0>0$ such that 
\begin{equation}\label{C1}
	\gamma\scrH_\epsilon^1\cap \scrH_\epsilon^1\cap\Gamma'\scrC=\emptyset
\end{equation}
for every $\epsilon\in(0,\epsilon_0]$, $\gamma\in \Gamma'-\Gamma_H$.} 

To prove this claim, note that \eqref{C1} is equivalent to 
\begin{equation}\label{C2}
	\scrH_\epsilon((\vecp,q))\cap\scrH_\epsilon^1\cap\Gamma'\scrC=\emptyset
\end{equation}
for every $(\vecp,q)\in\hatZZ^d$, $(\vecp,q)\neq(\vecnull,1)$. For 
\begin{equation}\label{MAb}
	M=\begin{pmatrix} A & \trans\vecb \\ \vecnull & 1 \end{pmatrix} M_\vecy, \qquad
	M_\vecy=\begin{pmatrix} y_d^{-1/(d-1)} 1_{d-1} & \trans\vecnull \\ \vecy' & y_d\end{pmatrix},
\end{equation}
we have
\begin{equation}
	(\vecp,q) M
	=(\vecp A y_d^{-1/(d-1)}+(\vecp\trans\vecb+q)\vecy', (\vecp\trans\vecb+q)y_d),
\end{equation}
and thus $M\in\scrH_\epsilon((\vecp,q))\cap\scrH_\epsilon^1$ if and only if
\begin{equation}\label{A1}
	\|\vecp A y_d^{-1/(d-1)}+(\vecp\trans\vecb+q)\vecy'\|<\epsilon (\vecp\trans\vecb+q)y_d,
\end{equation}
\begin{equation}\label{A12}
	\theta <(\vecp\trans\vecb+q)y_d \leq 1,
\end{equation}
and 
\begin{equation}\label{A2}
	\| \vecy' \| < \epsilon y_d , \qquad \theta <y_d\leq 1 .
\end{equation}
Relations \eqref{A12} and \eqref{A2} imply $\|(\vecp\trans\vecb+q)\vecy'\|<\epsilon(\vecp\trans\vecb+q)y_d\leq \epsilon$ and so, by \eqref{A1}, $\|\vecp A y_d^{-1/(d-1)}\|<2\epsilon(\vecp\trans\vecb+q)y_d\leq 2\epsilon$. That is, $\|\vecp A \|<2\epsilon y_d^{1/(d-1)}$ and hence 
\begin{equation}\label{A2e}
\|\vecp A \|< 2\epsilon.	
\end{equation}
Let us now suppose $M\in\Gamma'\scrC$ with $\scrC$ compact. The set
\begin{equation}
	\scrC'=\scrC \big\{ M_\vecy^{-1} : \vecy \in \overline \fC_\epsilon \big\} 
\end{equation}
is still compact, by the compactness of $\overline \fC_\epsilon$ (the closure of $\fC_\epsilon$) in $\RR^d\setminus\{\vecnull\}$. In view of \eqref{MAb} we obtain 
\begin{equation}
	\begin{pmatrix} A & \trans\vecb \\ \vecnull & 1 \end{pmatrix} \in \Gamma'\scrC',
\end{equation}
and so $A\in \SL(d-1,\ZZ) \scrC_0$ for some compact $\scrC_0\subset \SL(d-1,\RR)$.

Mahler's compactness criterion then shows that
\begin{equation}
	I:=\inf_{A\in\SL(d-1,\ZZ)\scrC_0}\inf_{\vecp\in\ZZ^{d-1}\setminus\{\vecnull\}}\|\vecp A \|>0 .
\end{equation}
Now choose $\epsilon_0$ such that $0<2\epsilon_0<I$. Then \eqref{A2e} implies $\vecp=\vecnull$ and therefore $q=1$. The claim is proved.

{\bf Step 3: Apply equidistribution of large horospheres.} Step 2 implies that, for $\scrC\subset G$ compact, there exists $\epsilon_0>0$ such that for every $\epsilon\in(0,\epsilon_0]$
\begin{equation}\label{HaHaHa}
	\scrH_\epsilon\cap \Gamma'\scrC =\bigcup_{\gamma\in\Gamma'/\Gamma_H} \big( \gamma\scrH_\epsilon^1 \cap \Gamma'\scrC \big) 
\end{equation}
is a disjoint union. Hence, if $\chi_\epsilon$ and $\chi_\epsilon^1$ are the characteristic functions of the sets $\scrH_\epsilon$ and $\scrH_\epsilon^1$, respectively, we have
\begin{equation}\label{chi}
	\chi_\epsilon(M) = \sum_{\gamma\in\Gamma_H\backslash\Gamma'} \chi_{\epsilon}^1(\gamma M) ,
\end{equation}
for all $M\in\Gamma\scrC$.
Evidently $\scrH_\epsilon^1$ and thus $\scrH_\epsilon$ have boundary of $\mu$-measure zero.
We furthermore set $\widetilde\chi_\epsilon(M):=\chi_\epsilon(\trans M^{-1})$, and note that  
\begin{equation}
	\chi_\epsilon\big(n_+(\vecx)\Phi^{-t}\big)=\chi_\epsilon\big(n_+(-\vecx)\Phi^{-t}\big)
\end{equation}
is the characteristic function of the set $\scrF_Q^\epsilon$; recall \eqref{FQeps} and the remark after \eqref{FQeps0}.
Therefore
\begin{equation}\label{commEq2}
\begin{split}
	\int_{\scrF_Q^\epsilon\cap[0,1]^{d-1}} & f\big(\vecx,n_-(\vecx)\Phi^{t},n_-(\vecx)\Phi^{t}\big) d\vecx \\ & = \int_{[0,1]^{d-1}} f\big(\vecx,n_-(\vecx)\Phi^{t},n_-(\vecx)\Phi^{t}\big) 
	\chi_\epsilon\big(n_+(-\vecx)\Phi^{-t}\big) d\vecx\\
	& = \int_{[0,1]^{d-1}} f\big(\vecx,n_-(\vecx)\Phi^{t},n_-(\vecx)\Phi^{t}\big) 
	\widetilde\chi_\epsilon\big(n_-(\vecx)\Phi^{t}\big) d\vecx .
\end{split}
\end{equation}

Case (A): If $\Gamma,\Gamma'$ are not commensurable, Theorem \ref{equiThm2} yields
\begin{multline}\label{last0}
	\lim_{t\to\infty} \int_{[0,1]^{d-1}}  f\big(\vecx,n_-(\vecx)\Phi^{t},n_-(\vecx)\Phi^{t}\big)\widetilde\chi_\epsilon\big(n_-(\vecx)\Phi^{t}\big)\, d\vecx  \\
= \int_{[0,1]^{d-1}\times \GamG\times \Gamma'\backslash G} \widetilde f(\vecx,M,M') \chi_\epsilon(M')\, d\vecx \, d\mu_{\Gamma}(M)\,d\mu_{\Gamma'}(M') .
\end{multline}

Case (B): If $\Gamma,\Gamma'$ are commensurable, then $\Delta=\Gamma\cap\Gamma'$ is a lattice in $G$ and Theorem \ref{equiThm0} yields
\begin{equation}\label{last}
\begin{split}
	\lim_{t\to\infty} & \int_{[0,1]^{d-1}}  f\big(\vecx,n_-(\vecx)\Phi^{t},n_-(\vecx)\Phi^{t}\big)\widetilde\chi_\epsilon\big(n_-(\vecx)\Phi^{t}\big)\, d\vecx  \\
	& =  \int_{[0,1]^{d-1}\times \Delta\backslash G} f(\vecx,M,M) \widetilde\chi_\epsilon(M)\, d\vecx \, d\mu_\Delta(M) \\
	& = \int_{[0,1]^{d-1}\times \Delta\backslash G} f(\vecx,\trans M^{-1},\trans M^{-1}) \chi_\epsilon(M)\, d\vecx \, d\mu_\Delta(M) \\
		& = \int_{[0,1]^{d-1}\times \Gamma'\backslash G} \overline f(\vecx,M,M) \chi_\epsilon(M)\, d\vecx \, d\mu_{\Gamma'}(M) ,
\end{split}
\end{equation}
since $d\mu_{\Gamma'}(M)=|\Gamma':\Delta|\, d\mu_{\Delta}(M)$.

{\bf Step 4: A volume computation.}
To evaluate the right hand sides of \eqref{last0} and \eqref{last}, we set (in order to treat both cases simultaneously)
\begin{equation}
	g(M)= \int_{[0,1]^{d-1}\times \GamG} \widetilde f(\vecx,\tilde M,M) \,  d\vecx \, d\mu_{\Gamma}(\tilde M) 
\end{equation}
for Case (A), and 
\begin{equation}
	g(M)= \int_{[0,1]^{d-1}} \overline f(\vecx,M,M) \, d\vecx 
\end{equation}
for Case (B). We thus need to evaluate
\begin{equation}\label{RhS}
\begin{split}
	 \int_{\Gamma'\backslash G} g(M) \chi_\epsilon(M)\, d\mu(M)
	 & = \int_{\Gamma_H\backslash G} g(M) \chi_\epsilon^1(M)\, d\mu(M)\\
	 & = \int_{\Gamma_H\backslash\scrH_\epsilon^1} g(M)\, d\mu(M) ,
\end{split}
\end{equation}
using \eqref{chi}.
Given $\vecy\in\RR^d$ we pick a matrix $M_\vecy\in G$ such that $(\vecnull,1) M_\vecy=\vecy$; recall \eqref{My} for an explicit choice of $M_\vecy$ for $y_d>0$. The map
\begin{equation}
	H \times \RR^d\setminus\{\vecnull\} \to  G, \qquad 
	(M, \vecy) \mapsto MM_\vecy, 
\end{equation}
provides a parametrization of $G$, where in view of \eqref{siegel}
\begin{equation}
	d\mu = \zeta(d)^{-1} d\mu_H \, d\vecy .
\end{equation}
Hence \eqref{RhS} equals, in view of \eqref{HaHa2},
\begin{equation}\label{RhS1}
	\frac{1}{\zeta(d)} \int_{\Gamma_H\backslash H \times\fC_\epsilon} g\big(M M_\vecy \big) \, d\mu_H(M) d\vecy .
\end{equation}

For
\begin{equation}\label{Ddef}
	D(y_d)=\begin{pmatrix} y_d^{-1/(d-1)} 1_{d-1} & \trans\vecnull \\ \vecnull & y_d \end{pmatrix},
\end{equation}
we have, in view of \eqref{dchoice},
\begin{equation}
	d\big(M_\vecy, D(y_d)\big)= d\big(D(y_d) n_+(y_d^{-1}\vecy'), D(y_d)\big)
	= d\big(n_+(y_d^{-1}\vecy'), 1_d \big)
	\leq y_d^{-1} \| \vecy' \| . 
\end{equation}
We recall that $y_d^{-1} \| \vecy' \|<\epsilon$ for $\vecy\in\fC_\epsilon$.
Therefore, with the choice of $\delta,\epsilon$ made in Steps 0 and 2, we have 
\begin{equation}\label{f0}
	\bigg| \eqref{RhS1} -
	\frac{1}{\zeta(d)} \int_{\Gamma_H\backslash H\times \fC_\epsilon} g\big(M D(y_d)\big) \, d\mu_H(M) d\vecy \bigg|  <   \frac{\delta}{\zeta(d)}\int_{\fC_\epsilon} d\vecy .
\end{equation}
Now,
\begin{equation}
\begin{split}
	\int_{\fC_\epsilon} g\big(M D(y_d) \big) \, d\vecy
	& = \vol(\scrB_1^{d-1})\,\epsilon^{d-1} \int_\theta^1 g\big(M D(y_d)\big)\, y_d^{d-1}\,dy_d \\
	& = (d-1) \vol(\scrB_1^{d-1})\,\epsilon^{d-1} \int_{0}^{|\log\theta|/(d-1)} g\big(M\Phi^{-s} \big) \,\e^{-d(d-1)s} ds ,
	\end{split} 
\end{equation}
and
\begin{equation}
	\int_{\fC_\epsilon} d\vecy = \frac{1}{d}\,\vol(\scrB_1^{d-1})\,\epsilon^{d-1} (1-\theta^d),
\end{equation}
where $\scrB_1^{d-1}$ denotes the unit ball in $\RR^{d-1}$.
So \eqref{f0} becomes
\begin{multline}\label{f1}
	\bigg| \eqref{RhS1} -
	\frac{(d-1) \vol(\scrB_1^{d-1})\,\epsilon^{d-1}}{\zeta(d)} \int_{0}^{|\log\theta|/(d-1)} \int_{\Gamma_H\backslash H}  g\big(M\Phi^{-s} \big) \, d\mu_H(M)\, \e^{-d(d-1)s} ds \bigg| \\ <  \frac{\vol(\scrB_1^{d-1})\,\delta \, \epsilon^{d-1}}{d\,\zeta(d)} \,(1-\theta^d)  .
\end{multline}

{\bf Step 5: Distance estimates.} Since \eqref{HaHaHa} is a disjoint union, we have furthermore (this is in effect another way of writing \eqref{commEq2} using \eqref{chi})
\begin{multline}
\int_{\scrF_Q^\epsilon\cap[0,1]^{d-1}} f\big(\vecx,n_-(\vecx)\Phi^{t},n_-(\vecx)\Phi^{t}\big) d\vecx \\
=	\sum_{\vecr\in\scrF_{Q,\theta}+\ZZ^{d-1}} \int_{\{\vecx\in [0,1]^{d-1}: \|\vecx-\vecr\|<\epsilon\e^{-dt}\}} f\big(\vecx,n_-(\vecx)\Phi^{t},n_-(\vecx)\Phi^{t}\big) d\vecx .
\end{multline}

Note that $n_-(\vecx)\Phi^t=\Phi^t n_-(\e^{dt}\vecx)$. By \eqref{dchoice}, for any $g\in G$,
\begin{equation}\label{expon}
	d\big(g n_-(\vecx)\Phi^t,g \Phi^t\big)
	= d\big(g \Phi^t n_-(\e^{dt} \vecx),g \Phi^t\big)
	= d\big(n_-(\e^{dt} \vecx), 1_d \big)
	\leq \e^{dt} \| \vecx \|. 
\end{equation}
Eq.~\eqref{expon} implies that 
\begin{equation}
	d\big(n_-(\vecx)\Phi^t,n_-(\vecr) \Phi^t\big)
	\leq \e^{dt} \| \vecx -\vecr \| <\epsilon .
\end{equation}
Because $f$ is uniformly continuous we therefore have, for the same $\delta,\epsilon$ as above:
\begin{multline}\label{f2}
\bigg| \int_{\|\vecx-\vecr\|<\epsilon\e^{-dt}} f\big(\vecx,n_-(\vecx)\Phi^{t},n_-(\vecx)\Phi^{t}\big) d\vecx - \frac{\vol(\scrB_1^{d-1}) \epsilon^{d-1}}{\e^{d(d-1)t}} f\big(\vecr,n_-(\vecr)\Phi^{t},n_-(\vecr)\Phi^{t}\big)  \bigg| \\ < \frac{\vol(\scrB_1^{d-1}) \,\delta\, \epsilon^{d-1}}{\e^{d(d-1)t}},
\end{multline}
uniformly for all $t\geq 0$.

{\bf Step 6: Conclusion.}
The approximations \eqref{f1} and \eqref{f2} hold uniformly for any $\delta>0$. Passing to the limit $\delta\to 0$, we obtain
\begin{multline}\label{lalalast}
	\lim_{t\to\infty} \frac{1}{\e^{d(d-1)t}} \sum_{\vecr\in\scrF_{Q,\theta}} f\big(\vecr,n_-(\vecr)\Phi^{t},n_-(\vecr)\Phi^{t}\big) \\
	= \frac{d-1}{\zeta(d)} \int_{0}^{|\log\theta|/(d-1)} \int_{\Gamma_H\backslash H}  g\big(M\Phi^{-s} \big) \, d\mu_H(M)\, \e^{-d(d-1)s} ds .
\end{multline}
The asymptotics \eqref{asymQ} 
shows that (recall that $Q^d=\e^{d(d-1)t}$)
\begin{equation}
	\limsup_{t\to\infty} \frac{|\scrF_Q\setminus \scrF_{Q,\theta}|}{\e^{d(d-1)t}}  \leq \frac{\theta^d}{d\,\zeta(d)} ,
\end{equation}
which allows us to take the limit $\theta\to 0$ in \eqref{lalalast}.
This concludes the proof for $\sigma=0$ and $f$ compactly supported, in both Case (A) and (B). For the extension to general $\sigma$ and $f$, recall the remarks in Step 0.
\end{proof}

\section*{Acknowledgements}

I thank Alexander Gorodnik and Andreas Str\"ombergsson for helpful discussions. This research project has been supported by a Royal Society Wolfson Research Merit Award, and a grant from the Max Planck Institute for Mathematics in Bonn, where this paper was written.

\end{document}